\newtheorem{thm}{Theorem}[section]
\numberwithin{equation}{section}
\begin{document}

\title{\bf Left-invariant Riemann solitons of three-dimensional Lorentzian Lie groups}
\author{Yong Wang}

\thanks{{\scriptsize
\hskip -0.4 true cm \textit{2010 Mathematics Subject Classification:}
53C40; 53C42.
\newline \textit{Key words and phrases:} Left-invariant Riemann solitons; three-dimensional Lorentzian Lie groups }}

\maketitle
\begin{abstract}
 In this note, we completely classify left-invariant Riemann solitons on three-dimensional Lorentzian Lie groups.
\end{abstract}

\pagestyle{myheadings}
\markboth{\rightline {\scriptsize Wang}}
         {\leftline{\scriptsize Left-invariant Riemann solitons}}

\bigskip


\section{ Introduction}
\indent Riemann solitons are generalized fixed points of the Riemann flow. In
the context of contact geometry, Hirica and Udriste proved \cite{HU} that if a Sasakian manifold
admited a Riemann soliton with potential vector field pointwise collinear with the structure
vector field ¦Î, then it was a Sasakian space form. In \cite{BL}, Blaga and Latcu studied almost Riemann solitons and almost Ricci solitons in an $(\alpha,\beta)$-contact metric manifold satisfying some Ricci symmetry conditions, treating the case when
the potential vector field of the soliton was pointwise collinear with the structure
vector field. In \cite{Ca1},
 Calvaruso studied three-dimensional generalized Ricci solitons, both in Riemannian and Lorentzian settings. He determined their homogeneous models, classifying left-invariant generalized Ricci solitons on three-dimensional Lie groups. In \cite{BO}, Batat and Onda studied
algebraic Ricci solitons of three-dimensional Lorentzian Lie groups. They got a complete classification of algebraic Ricci solitons of three-dimensional Lorentzian Lie groups. In \cite{Ca2}, Calvaruso completely classify three-dimensional homogeneous manifolds equipped with Einstein-like metrics.
In \cite{Wa}, we classify affine Ricci solitons associated to canonical connections and Kobayashi-Nomizu connections and perturbed canonical connections and perturbed Kobayashi-Nomizu connections on three-dimensional Lorentzian Lie groups with
 some product structure. In this note, we completely classify left-invariant Riemann solitons on three-dimensional Lorentzian Lie groups.


\vskip 1 true cm

\section{ Left-invariant Riemann solitons of three-dimensional Lorentzian Lie groups}

\indent Three-dimensional Lorentzian Lie groups had been classified in \cite{Ca3,CP}(see Theorem 2.1 and Theorem 2.2 in \cite{BO}). Throughout this paper,
we shall by $\{G_i\}_{i=1,\cdots,7}$, denote the connected, simply connected three-dimensional Lie group equipped with a left-invariant Lorentzian metric $g$ and
having Lie algebra $\{\mathfrak{g}\}_{i=1,\cdots,7}$. Let $\nabla$ be the Levi-Civita connection of $G_i$ and $R$ its curvature tensor, taken with the convention
\begin{equation}
R(X,Y)Z=\nabla_X\nabla_YZ-\nabla_Y\nabla_XZ-\nabla_{[X,Y]}Z.
\end{equation}
Let $R(X,Y,Z,W)=-g(R(X,Y)Z,W)$.
Riemann solitons are defined by a smooth vector field and a real constant $\lambda$ which satisfy the following equation:
\begin{equation}
R+\frac{1}{2}L_Vg\wedge g=\frac{\lambda}{2}g\wedge g,
\end{equation}
where $L_Vg$ denotes the Lie derivative of $g$ and $\wedge$ is the Kulkarni-Nomizu product. Let $T_1$ and $T_2$ be two arbitrary $(0,2)$-tensors, then their Kulkarni-Nomizu product is defined by
\begin{align}
T_1\wedge T_2(X,Y,Z,W)&:=T_1(X,W)T_2(Y,Z)+T_1(Y,Z)T_2(X,W)\\\notag
&-T_1(X,Z)T_2(Y,W)-T_1(Y,W)T_2(X,Z),
\end{align}
for any $X,Y,Z,W\in \Gamma(TG_i)$, where $\Gamma(TG_i)$ denotes the set of all vector fields on $G_i$. By (2.2) and (2.3), we can express
the Riemann soliton as follows:
\begin{align}
&2R(X,Y,Z,W)+g(X,W)(L_Vg)(Y,Z)+g(Y,Z)(L_Vg)(X,W)\\\notag
&-g(X,Z)(L_Vg)(Y,W)-g(Y,W)(L_Vg)(X,Z)\\\notag
&=2\lambda[g(X,W)g(Y,Z)-g(X,Z)g(Y,W)].
\end{align}
For $G_i$, there exists a pseudo-orthonormal basis $\{e_1,e_2,e_3\}$ with $e_3$ timelike. Let $V=\lambda_1e_1+\lambda_2e_2+\lambda_3e_3$, where $\lambda_1,\lambda_2,\lambda_3$ are real numbers. Let $R_{ijkl}=R(e_i,e_j,e_k,e_l)$. Then $(G_i,V,g)$ is a left-invariant Riemann soliton if and only if
\begin{align}
\left\{\begin{array}{l}
2R_{1212}-(L_Vg)(e_2,e_2)-(L_Vg)(e_1,e_1)=-2\lambda,\\
2R_{1312}-(L_Vg)(e_2,e_3)=0,\\
2R_{2312}+(L_Vg)(e_1,e_3)=0,\\
2R_{1313}-(L_Vg)(e_3,e_3)+(L_Vg)(e_1,e_1)=2\lambda,\\
2R_{2313}+(L_Vg)(e_1,e_2)=0,\\
2R_{2323}-(L_Vg)(e_3,e_3)+(L_Vg)(e_2,e_2)=2\lambda.\\
\end{array}\right.
\end{align}
By Theorem 2.1 in \cite{BO}, we have for $G_1$, there exists a pseudo-orthonormal basis $\{e_1,e_2,e_3\}$ with $e_3$ timelike such that the Lie
algebra of $G_1$ satisfies
\begin{equation}
[e_1,e_2]=\alpha e_1-\beta e_3,~~[e_1,e_3]=-\alpha e_1-\beta e_2,~~[e_2,e_3]=\beta e_1+\alpha e_2+\alpha e_3,~~\alpha\neq 0.
\end{equation}
By (2.18) in \cite{Ca2}, we have for $G_1$
\begin{align}
&R_{1212}=-2\alpha^2-\frac{\beta^2}{4},~~R_{1313}=\frac{\beta^2}{4}-2\alpha^2,~~R_{2323}=\frac{\beta^2}{4},\\\notag
&R_{1213}=2\alpha^2,~~R_{1223}=-\alpha\beta,~~R_{1323}=\alpha\beta.
\end{align}
Let
\begin{align}
L_Vg=\left(\begin{array}{ccc}
(L_Vg)(e_1,e_1)&(L_Vg)(e_1,e_2)&(L_Vg)(e_1,e_3)\\
(L_Vg)(e_2,e_1)&(L_Vg)(e_2,e_2)&(L_Vg)(e_2,e_3)\\
(L_Vg)(e_3,e_1)&(L_Vg)(e_3,e_2)&(L_Vg)(e_3,e_3)
\end{array}\right).
\end{align}
By page 7 in \cite{Ca1}, we get for $G_1$,
\begin{align}
L_Vg=\left(\begin{array}{ccc}
2\alpha(\lambda_2-\lambda_3)&-\alpha\lambda_1&\alpha\lambda_1\\
-\alpha\lambda_1&2\alpha\lambda_3&-\alpha(\lambda_2+\lambda_3)\\
\alpha\lambda_1&-\alpha(\lambda_2+\lambda_3)&2\alpha\lambda_2
\end{array}\right).
\end{align}
By (2.5)(2.7)(2.9) and $\alpha\neq 0$, we get that
$(G_1,V,g)$ is a left-invariant Riemann soliton if and only if
\begin{align}
\left\{\begin{array}{l}
-2\alpha^2-\frac{\beta^2}{4}-\alpha\lambda_2=-\lambda,\\
4\alpha+\lambda_2+\lambda_3=0,\\
\lambda_1=2\beta,\\
\frac{\beta^2}{4}-2\alpha^2-\alpha\lambda_3=\lambda,\\
\frac{\beta^2}{2}-2\alpha\lambda_2+2\alpha\lambda_3=2\lambda.\\
\end{array}\right.
\end{align}
The first equation plusing the fourth equation in (2.10), we get $\lambda_2+\lambda_3+4\alpha=0$. By the fourth equation and the fifth equation in (2.10), we $\lambda_2-2\lambda_3-2\alpha=0$. Then $\lambda_2=\lambda_3=-2\alpha$. By the first equation in (2.10), we get $\lambda=\frac{\beta^2}{4}$. So we have
\begin{thm}
$(G_1,V,g)$ is a left-invariant Riemann soliton if and only if $\lambda_1=2\beta,~\lambda_2=-2\alpha,~\lambda_3=-2\alpha,~\lambda=\frac{\beta^2}{4}$.
\end{thm}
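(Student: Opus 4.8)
The plan is to reduce the tensorial soliton equation to a finite linear system in the real parameters $\lambda_1,\lambda_2,\lambda_3,\lambda$ and then solve that system. Since $R+\frac12 L_Vg\wedge g-\frac{\lambda}{2}g\wedge g$ is a $(0,4)$-tensor carrying the algebraic symmetries of a curvature tensor, it vanishes identically on $\Gamma(TG_1)$ if and only if it vanishes on the pseudo-orthonormal basis $\{e_1,e_2,e_3\}$; evaluating it on the basis is precisely what produces the six scalar equations of (2.5). So the first step is simply to specialize (2.5) to $G_1$.

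First I would substitute the curvature components (2.7) and the entries of the matrix (2.9) into (2.5). This requires re-expressing the off-diagonal curvature terms occurring in (2.5)---namely $R_{1312}$, $R_{2312}$, $R_{2313}$---in terms of the listed values $R_{1213}=2\alpha^2$, $R_{1223}=-\alpha\beta$, $R_{1323}=\alpha\beta$ via the pair symmetry $R(X,Y,Z,W)=R(Z,W,X,Y)$. After collecting terms and cancelling a common factor of $\alpha$---which is legitimate precisely because $\alpha\neq0$---the six equations reduce to the system (2.10). I would note here that the third and fifth equations of (2.5) both collapse to $\lambda_1=2\beta$, so (2.10) carries five equations, not six.

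Next I would solve (2.10). Adding its first and fourth equations gives $-\alpha(4\alpha+\lambda_2+\lambda_3)=0$, hence $\lambda_2+\lambda_3=-4\alpha$; in particular the second equation is a consequence of the first and fourth and adds nothing new. Equating the two expressions for $\lambda$ furnished by the fourth and fifth equations yields $\lambda_2-2\lambda_3=2\alpha$, and solving this together with $\lambda_2+\lambda_3=-4\alpha$ forces $\lambda_2=\lambda_3=-2\alpha$. Substituting back into the first equation gives $\lambda=\frac{\beta^2}{4}$, while the third equation has already fixed $\lambda_1=2\beta$. For the converse one checks directly that these four values satisfy every equation of (2.10), which establishes the ``if and only if''.

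The linear algebra in the last step is entirely routine and presents no real obstacle. The one place that demands care is the passage from (2.5) to (2.10): one must apply the Lie derivative formula (2.9), quoted from \cite{Ca1}, correctly and keep track of the curvature symmetries so that the coefficients of $\alpha$ and $\beta$ align, and one must use $\alpha\neq0$ in order to divide through. Once (2.10) is secured, the theorem follows.
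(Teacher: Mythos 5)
Your proposal is correct and follows essentially the same route as the paper: substitute the curvature components (2.7) and the Lie derivative matrix (2.9) into (2.5), divide by $\alpha\neq 0$ to obtain (2.10), and then solve the resulting linear system by adding the first and fourth equations and comparing the fourth and fifth. Your additional observations (that the third and fifth equations of (2.5) both reduce to $\lambda_1=2\beta$, and that the second equation of (2.10) is redundant) are accurate but do not change the argument.
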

By Theorem 2.1 in \cite{BO}, we have for $G_2$, there exists a pseudo-orthonormal basis $\{e_1,e_2,e_3\}$ with $e_3$ timelike such that the Lie
algebra of $G_2$ satisfies
\begin{equation}
[e_1,e_2]=\gamma e_2-\beta e_3,~~[e_1,e_3]=-\beta e_2-\gamma e_3,~~[e_2,e_3]=\alpha e_1,~~\gamma\neq 0.
\end{equation}
By page 144 in \cite{BO}, we have for $G_2$
\begin{align}
&R_{1212}=-\gamma^2-\frac{\alpha^2}{4},~~R_{1313}=\frac{\alpha^2}{4}+\gamma^2,~~R_{2323}=-\gamma^2-\frac{3}{4}\alpha^2+\alpha\beta,\\\notag
&R_{1213}=\gamma(2\beta-\alpha),~~R_{1223}=0,~~R_{1323}=0.
\end{align}
By page 8 in \cite{Ca1}, we get for $G_2$ (we correct a misprint in \cite{Ca1}),
\begin{align}
L_Vg=\left(\begin{array}{ccc}
0&\gamma\lambda_2+(\alpha-\beta)\lambda_3&(-\alpha+\beta)\lambda_2+\gamma\lambda_3\\
\gamma\lambda_2+(\alpha-\beta)\lambda_3&-2\gamma\lambda_1&0\\
(-\alpha+\beta)\lambda_2+\gamma\lambda_3&0&-2\gamma\lambda_1
\end{array}\right).
\end{align}
By (2.5)(2.12)(2.13), we get that
$(G_2,V,g)$ is a left-invariant Riemann soliton if and only if
\begin{align}
\left\{\begin{array}{l}
-\gamma^2-\frac{\alpha^2}{4}+\gamma\lambda_1=-\lambda,\\
\gamma(2\beta-\alpha)=0,\\
(-\alpha+\beta)\lambda_2+\gamma\lambda_3=0,\\
\frac{\alpha^2}{4}+\gamma^2+\gamma\lambda_1=\lambda,\\
\gamma\lambda_2+(\alpha-\beta)\lambda_3=0,\\
-\gamma^2-\frac{3}{4}\alpha^2+\alpha\beta=\lambda.\\
\end{array}\right.
\end{align}
By the first equation and the fourth equation and $\gamma\neq 0$ in (2.14), we get $\lambda_1=0$ and $\lambda=\frac{\alpha^2}{4}+\gamma^2$.
By the second equation and the sixth equation in (2.14), we get $\lambda=-\frac{\alpha^2}{4}-\gamma^2$. Then $\gamma=0$ and this is a contradiction.
So
\begin{thm}
$(G_2,V,g)$ is not a left-invariant Riemann soliton.
\end{thm}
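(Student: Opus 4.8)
The plan is to follow exactly the substitution-and-reduction scheme already used for $G_1$: feed the curvature components (2.12) and the Lie-derivative matrix (2.13) into the six soliton equations (2.5), producing the algebraic system (2.14), and then show that this system is inconsistent whenever $\gamma\neq 0$. Since the assertion is a non-existence statement, I only need to extract a contradiction from (2.14) rather than solve it completely.

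First I would combine the first and fourth equations of (2.14). Both contain the term $\gamma\lambda_1$, while the curvature contributions $-\gamma^2-\frac{\alpha^2}{4}$ and $\frac{\alpha^2}{4}+\gamma^2$ are exact negatives of one another, as are the right-hand sides $-\lambda$ and $\lambda$. Adding the two equations therefore cancels both the curvature and $\lambda$, leaving $2\gamma\lambda_1=0$; since $\gamma\neq 0$ this forces $\lambda_1=0$, and then the fourth equation pins down $\lambda=\frac{\alpha^2}{4}+\gamma^2$.

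Next I would exploit the second equation $\gamma(2\beta-\alpha)=0$. Again using $\gamma\neq 0$, this gives $\alpha=2\beta$. Substituting $\beta=\frac{\alpha}{2}$ into the sixth equation $-\gamma^2-\frac{3}{4}\alpha^2+\alpha\beta=\lambda$ collapses its quadratic terms to $-\gamma^2-\frac{\alpha^2}{4}=\lambda$. I now have two expressions for the single constant $\lambda$, namely $\frac{\alpha^2}{4}+\gamma^2$ from the previous step and $-\frac{\alpha^2}{4}-\gamma^2$ from this one.

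Equating these forces $\frac{\alpha^2}{2}+2\gamma^2=0$, hence $\alpha=\gamma=0$, contradicting the standing hypothesis $\gamma\neq 0$. Honestly, there is no deep obstacle here: every step is routine linear elimination in $\lambda,\lambda_1,\beta$. The only point requiring genuine care is the sign bookkeeping, since the contradiction is driven entirely by the opposite signs of the $\gamma^2$ and $\alpha^2$ terms in $R_{1313}$ versus $R_{2323}$. One must confirm that the two computed values of $\lambda$ are strict negatives of each other, so that their sum (a positive-definite quadratic in $\alpha,\gamma$) can vanish only at the origin, rather than being accidentally compatible. The third and fifth equations play no role in forcing the contradiction and can simply be noted as additional constraints on $\lambda_2,\lambda_3$.
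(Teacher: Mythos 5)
Your argument is correct and coincides with the paper's own proof: both derive $\lambda_1=0$ and $\lambda=\frac{\alpha^2}{4}+\gamma^2$ from the first and fourth equations of (2.14), then use the second and sixth equations to obtain $\lambda=-\frac{\alpha^2}{4}-\gamma^2$, forcing $\gamma=0$ and hence a contradiction. No gaps; the sign bookkeeping you flag is indeed the only delicate point and you have it right.
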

By Theorem 2.1 in \cite{BO}, we have for $G_3$, there exists a pseudo-orthonormal basis $\{e_1,e_2,e_3\}$ with $e_3$ timelike such that the Lie
algebra of $G_3$ satisfies
\begin{equation}
[e_1,e_2]=-\gamma e_3,~~[e_1,e_3]=-\beta e_2,~~[e_2,e_3]=\alpha e_1.
\end{equation}
By page 146 in \cite{BO}, we have for $G_3$
\begin{align}
&R_{1212}=-(a_1a_2+\gamma a_3),~~R_{1313}=a_1a_3+\beta a_2,~~R_{2323}=-(a_2a_3+\alpha a_1),\\\notag
&R_{1213}=0,~~R_{1223}=0,~~R_{1323}=0,
\end{align}
where
\begin{equation}
a_1=\frac{1}{2}(\alpha-\beta-\gamma),~~a_2=\frac{1}{2}(\alpha-\beta+\gamma),~~a_3=\frac{1}{2}(\alpha+\beta-\gamma).
\end{equation}
By page 9 in \cite{Ca1}, we get for $G_3$,
\begin{align}
L_Vg=\left(\begin{array}{ccc}
0&(\alpha-\beta)\lambda_3&(\gamma-\alpha)\lambda_2\\
(\alpha-\beta)\lambda_3&0&(\beta-\gamma)\lambda_1\\
(\gamma-\alpha)\lambda_2&(\beta-\gamma)\lambda_1&0
\end{array}\right).
\end{align}
By (2.5)(2.16)(2.18), we get that
$(G_3,V,g)$ is a left-invariant Riemann soliton if and only if
\begin{align}
\left\{\begin{array}{l}
a_1a_2+\gamma a_3=\lambda,\\
(\beta-\gamma)\lambda_1=0,\\
(\alpha-\gamma)\lambda_2=0,\\
(\alpha-\beta)\lambda_3=0,\\
a_1a_3+\beta a_2=\lambda,\\
a_2a_3+\alpha a_1=-\lambda.\\
\end{array}\right.
\end{align}
\begin{thm}
$(G_3,V,g)$ is a left-invariant Riemann soliton if and only if \\
(i)$\beta=\gamma$, $\alpha\neq \gamma$, $\lambda_2=\lambda_3=0$, $\alpha=0$, $\lambda=0$,\\
(ii)$\alpha=\beta=\gamma$, $\lambda=\frac{1}{4}\alpha^2$,\\
(iii)$\beta\neq \gamma$, $\alpha=\beta$, $\lambda_1=\lambda_2=0$, $\gamma=0$, $\lambda=0$,\\
(iv)$\beta\neq \gamma$, $\alpha=\gamma$, $\lambda_1=\lambda_3=0$, $\beta=0$, $\lambda=0$.\\
\end{thm}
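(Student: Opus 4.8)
The plan is to run a case analysis driven by the three ``product'' equations of (2.19) --- the second, third and fourth, namely $(\beta-\gamma)\lambda_1=0$, $(\alpha-\gamma)\lambda_2=0$, and $(\alpha-\beta)\lambda_3=0$. These split the discussion according to which of the coincidences $\alpha=\beta$, $\beta=\gamma$, $\alpha=\gamma$ hold. Since any two of these coincidences together force $\alpha=\beta=\gamma$, the exhaustive possibilities are: all three equal; exactly one coincidence; or all three pairwise distinct. In each branch the product equations immediately pin down which of $\lambda_1,\lambda_2,\lambda_3$ must vanish and which remain free, and the real content then lies in the three curvature equations --- the first, fifth and sixth of (2.19). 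Throughout I would use the expansions of $a_1,a_2,a_3$ from (2.17).

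First I would dispose of the totally symmetric branch $\alpha=\beta=\gamma$. Here all three product equations hold automatically, so $\lambda_1,\lambda_2,\lambda_3$ are unconstrained, and (2.17) gives $a_1=-\tfrac12\alpha$ and $a_2=a_3=\tfrac12\alpha$; substituting into the first, fifth and sixth equations of (2.19) yields the single consistent value $\lambda=\tfrac14\alpha^2$. This is precisely case (ii).

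Next, in each single-coincidence branch --- say $\beta=\gamma\neq\alpha$ --- the two genuinely nonzero differences force the corresponding potentials to vanish (here $\lambda_2=\lambda_3=0$) while the third ($\lambda_1$) stays free, matching the shape of the statement. The substantive step is the curvature side: after substituting the coincidence into the first, fifth and sixth equations of (2.19), two of them return one value of $\lambda$ while the third returns a different expression, and equating these produces a factored identity (for $\beta=\gamma$ it is $\alpha(\alpha-\beta)=0$) which, combined with the standing inequality $\alpha\neq\beta$, forces the remaining structure constant to vanish and hence $\lambda=0$. Carrying out the analogous computation for $\alpha=\beta\neq\gamma$ (giving $\gamma(\gamma-\alpha)=0$, so $\gamma=0$) and for $\alpha=\gamma\neq\beta$ (giving $\beta(\beta-\alpha)=0$, so $\beta=0$) produces exactly cases (iii) and (iv). I expect each of these to be a short but sign-sensitive expansion, so care with the signs in (2.17) is the only nuisance.

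The final and most delicate branch is $\alpha,\beta,\gamma$ pairwise distinct, where all three product equations force $\lambda_1=\lambda_2=\lambda_3=0$ and only the curvature equations remain; here lies the main obstacle, namely showing the system is inconsistent. Subtracting the first curvature equation from the fifth factors as $(\alpha-\beta-\gamma)(\beta-\gamma)=0$, so distinctness gives $\alpha=\beta+\gamma$; feeding this back into the first and sixth equations yields $\lambda=\beta\gamma$ and $\lambda=-\beta\gamma$ respectively, whence $\beta\gamma=0$. But $\beta=0$ forces $\alpha=\gamma$ and $\gamma=0$ forces $\alpha=\beta$, each contradicting distinctness, so this branch contributes no soliton, which is why it is absent from the list. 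Assembling the four admissible branches gives the stated classification, and I would close by verifying sufficiency directly: each listed parameter set indeed solves all six equations of (2.19).
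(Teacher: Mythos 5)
Your proposal is correct and follows essentially the same route as the paper: a case split on which coincidences among $\alpha,\beta,\gamma$ hold, with the product equations of (2.19) forcing the corresponding $\lambda_i$ to vanish and differences of the curvature equations yielding the factored identities $(\alpha-\beta-\gamma)(\beta-\gamma)=0$ and $(\alpha+\beta-\gamma)(\alpha-\beta)=0$ that eliminate the remaining parameters. The only cosmetic difference is that the paper extracts both factored identities up front (its system (2.20)) before splitting into cases, whereas you derive them within each branch; the conclusions agree in every case.
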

\begin{proof}
By the first equation and the fifth equation in (2.19), we get $a_1(a_2-a_3)+\gamma a_3-\beta a_2=0$. By (2.17), then we get $(\alpha-\beta-\gamma)(\beta-\gamma)=0$. By the fifth equation and the sixth equation in (2.19), we get
$(\alpha+\beta-\gamma)(\alpha-\beta)=0$ and
\begin{align}
\left\{\begin{array}{l}
(\beta-\gamma)\lambda_1=0,\\
(\alpha-\gamma)\lambda_2=0,\\
(\alpha-\beta)\lambda_3=0,\\
(\alpha-\beta-\gamma)(\beta-\gamma)=0,\\
(\alpha+\beta-\gamma)(\alpha-\beta)=0,\\
\lambda=a_1a_2+\gamma a_3.
\end{array}\right.
\end{align}
Case 1) $\beta\neq \gamma,~~\alpha\neq \gamma,~~\alpha\neq \beta$. Then by the fourth equation and the fifth equation in (2.20), we get
$\alpha=\gamma$. This is a contradiction and there are no solutions.\\
Case 2) $\beta=\gamma$, $\alpha\neq \gamma$. Solving (2.20), we get the case (i).\\
Case 3) $\alpha=\beta=\gamma$. Solving (2.20), we get the case (ii).\\
Case 4) $\beta\neq \gamma$, $\alpha=\beta$. Solving (2.20), we get the case (iii).\\
Case 5) $\beta\neq \gamma$, $\alpha=\gamma$. Solving (2.20), we get the case (iv).\\
\end{proof}
By Theorem 2.1 in \cite{BO}, we have for $G_4$, there exists a pseudo-orthonormal basis $\{e_1,e_2,e_3\}$ with $e_3$ timelike such that the Lie
algebra of $G_4$ satisfies
\begin{align}
[e_1,e_2]=-e_2+(2\eta-\beta)e_3,~~\eta=1~{\rm or}-1,~~[e_1,e_3]=-\beta e_2+ e_3,~~[e_2,e_3]=\alpha e_1.
\end{align}
By (2.32) in \cite{Ca2}, we have for $G_4$
\begin{align}
&R_{1212}=(2\eta-\beta)b_3-b_1b_2-1,~~R_{1313}=b_1b_3+\beta b_2+1,~~R_{2323}=-(b_2b_3+\alpha b_1+1),\\\notag
&R_{1213}=2\eta-\beta+b_1+b_2,~~R_{1223}=0,~~R_{1323}=0,
\end{align}
where
\begin{equation}
b_1=\frac{\alpha}{2}+\eta-\beta,~~b_2=\frac{\alpha}{2}-\eta,~~b_3=\frac{\alpha}{2}+\eta.
\end{equation}
By page 11 in \cite{Ca1}, we get for $G_4$,
\begin{align}
L_Vg=\left(\begin{array}{ccc}
0&-\lambda_2+(\alpha-\beta)\lambda_3&(\beta-\alpha-2\eta)\lambda_2-\lambda_3\\
-\lambda_2+(\alpha-\beta)\lambda_3&2\lambda_1&2\eta\lambda_1\\
(\beta-\alpha-2\eta)\lambda_2-\lambda_3&2\eta\lambda_1&2\lambda_1
\end{array}\right).
\end{align}
By (2.5)(2.22)(2.24), we get that
$(G_4,V,g)$ is a left-invariant Riemann soliton if and only if
\begin{align}
\left\{\begin{array}{l}
(2\eta-\beta)b_3-b_1b_2-1-\lambda_1=-\lambda,\\
2\eta-\beta+b_1+b_2-\eta\lambda_1=0,\\
(\beta-\alpha-2\eta)\lambda_2-\lambda_3=0,\\
b_1b_3+\beta b_2+1-\lambda_1=\lambda,\\
-\lambda_2+(\alpha-\beta)\lambda_3=0,\\
-(b_2b_3+\alpha b_1+1)=\lambda.\\
\end{array}\right.
\end{align}
\begin{thm}
$(G_4,V,g)$ is a left-invariant Riemann soliton if and only if \\
(i)$\beta\neq \eta$, $\alpha=0$, $\lambda_1=2-2\eta\beta$, $\lambda_2=\lambda_3=0$, $\lambda=0$,\\
(ii)$\alpha-\beta+\eta=0$, $\lambda_2=-\eta\lambda_3$, $\lambda_1=1-\eta\beta$, $\lambda=\frac{\alpha^2}{4}$.\\
\end{thm}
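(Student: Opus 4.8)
The plan is to dissect the six scalar equations of (2.25) by exploiting the way they decouple, rather than attacking all unknowns at once. I would begin with the second equation. Since $b_1+b_2=\alpha-\beta$ by (2.23), that equation collapses to $2\eta+\alpha-2\beta-\eta\lambda_1=0$, and multiplying through by $\eta$ (using $\eta^2=1$) yields the closed formula $\lambda_1=2+\eta\alpha-2\eta\beta$. This determines $\lambda_1$ outright in terms of the structure constants, removing it from the remaining analysis.

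The decisive step is to treat the third and fifth equations as a homogeneous linear system in $(\lambda_2,\lambda_3)$,
\begin{equation}
\begin{cases}
(\beta-\alpha-2\eta)\lambda_2-\lambda_3=0,\\
-\lambda_2+(\alpha-\beta)\lambda_3=0.
\end{cases}
\end{equation}
Its coefficient determinant is $(\beta-\alpha-2\eta)(\alpha-\beta)-1$, which I would expand and, again using $\eta^2=1$, recognize as the perfect square $-(\alpha-\beta+\eta)^2$. This produces the fundamental dichotomy governing the whole classification: when $\alpha-\beta+\eta\neq0$ the determinant is nonzero, forcing $\lambda_2=\lambda_3=0$; when $\alpha-\beta+\eta=0$ the system drops to rank one and collapses to the single relation $\lambda_2=-\eta\lambda_3$.

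Next I would extract $\lambda$ and the final parameter constraint. Substituting $\lambda_1$ into the first and fourth equations and computing the products $b_1b_2$, $b_1b_3$, $(2\eta-\beta)b_3$, $\beta b_2$, one finds that the cross terms in $\alpha\beta$ and $\beta\eta$ cancel against those carried by $\lambda_1$, so both equations reduce to the single value $\lambda=\frac{\alpha^2}{4}$. The sixth equation supplies the last relation: using $b_2b_3=\frac{\alpha^2}{4}-1$ it reads $\lambda=-\frac{3\alpha^2}{4}+\alpha(\beta-\eta)$, and equating this with $\lambda=\frac{\alpha^2}{4}$ gives $\alpha(\alpha-\beta+\eta)=0$. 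Hence either $\alpha=0$ or $\alpha-\beta+\eta=0$.

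It remains only to assemble the cases against the dichotomy of the second step. If $\alpha-\beta+\eta=0$ we fall in the rank-one branch, $\lambda_2=-\eta\lambda_3$; substituting $\alpha=\beta-\eta$ into the formula for $\lambda_1$ simplifies it to $1-\eta\beta$, and $\lambda=\frac{\alpha^2}{4}$, which is exactly (ii). If instead $\alpha=0$ with $\alpha-\beta+\eta\neq0$, that is $\beta\neq\eta$, the full-rank branch forces $\lambda_2=\lambda_3=0$, while $\lambda_1=2-2\eta\beta$ and $\lambda=0$, which is (i). I expect the only real obstacle to be bookkeeping: verifying the perfect-square factorization of the determinant and the mutual cancellation in the first, fourth, and sixth equations, and being careful to impose $\beta\neq\eta$ in (i) precisely so that the two families do not overlap along the locus $\alpha=0,\ \beta=\eta$.
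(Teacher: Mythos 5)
Your proposal is correct and follows essentially the same route as the paper: both hinge on extracting $\lambda=\frac{\alpha^2}{4}$ from the first and fourth equations and then using the sixth to obtain the key constraint $\alpha(\alpha-\beta+\eta)=0$, which drives the same two-case split. Your treatment of the third and fifth equations as a $2\times 2$ homogeneous system with determinant $-(\alpha-\beta+\eta)^2$ is a nice explicit justification of the step the paper compresses into ``solving (2.25)'', but it is a refinement of detail, not a different argument.
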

\begin{proof}
The fourth equation minusing the first equation in (2.25), we get $b_1b_3+\beta b_2+1-(2\eta-\beta)b_3+b_1b_2+1=2\lambda$. By the sixth equation in (2.25), we get $\alpha(\alpha-\beta+\eta)=0$.\\
Case 1) $\alpha-\beta+\eta\neq 0$. Then $\alpha=0$, solving (2.25), we get case (i).\\
Case 2) $\alpha-\beta+\eta= 0$. Solving (2.25), we get case (ii).\\
\end{proof}
By Theorem 2.2 in \cite{BO}, we have for $G_5$, there exists a pseudo-orthonormal basis $\{e_1,e_2,e_3\}$ with $e_3$ timelike such that the Lie
algebra of $G_5$ satisfies
\begin{equation}
[e_1,e_2]=0,~~[e_1,e_3]=\alpha e_1+\beta e_2,~~[e_2,e_3]=\gamma e_1+\delta e_2,~~\alpha+\delta\neq 0,~~\alpha\gamma+\beta\delta=0.
\end{equation}
By (2.36) in \cite{Ca2}, we have for $G_5$
\begin{align}
&R_{1212}=\alpha\delta-\frac{(\beta+\gamma)^2}{4},~~R_{1313}=-\alpha^2-\frac{\beta(\beta+\gamma)}{2}-\frac{\beta^2-\gamma^2}{4},\\\notag
&R_{2323}=-\delta^2-\frac{\gamma(\beta+\gamma)}{2}+\frac{\beta^2-\gamma^2}{4},~~
R_{1213}=0,~~R_{1223}=0,~~R_{1323}=0.
\end{align}
By page 13 in \cite{Ca1}, we get for $G_5$,
\begin{align}
L_Vg=\left(\begin{array}{ccc}
2\alpha\lambda_3&(\beta+\gamma)\lambda_3&-\alpha\lambda_1-\gamma\lambda_2\\
(\beta+\gamma)\lambda_3&2\delta\lambda_3&-\beta\lambda_1-\delta\lambda_2\\
-\alpha\lambda_1-\gamma\lambda_2&-\beta\lambda_1-\delta\lambda_2&0
\end{array}\right).
\end{align}
By (2.5)(2.27)(2.28), we get that
$(G_5,V,g)$ is a left-invariant Riemann soliton if and only if
\begin{align}
\left\{\begin{array}{l}
\alpha\delta-\frac{(\beta+\gamma)^2}{4}-\delta\lambda_3-\alpha\lambda_3=-\lambda,\\
\beta\lambda_1+\delta\lambda_2=0,\\
\alpha\lambda_1+\gamma\lambda_2=0,\\
-\alpha^2-\frac{\beta(\beta+\gamma)}{2}-\frac{\beta^2-\gamma^2}{4}+\alpha\lambda_3=\lambda,\\
(\beta+\gamma)\lambda_3=0,\\
-\delta^2-\frac{\gamma(\beta+\gamma)}{2}+\frac{\beta^2-\gamma^2}{4}+\delta\lambda_3=\lambda.\\
\end{array}\right.
\end{align}
\begin{thm}
$(G_5,V,g)$ is a left-invariant Riemann soliton if and only if \\
(i)$\beta+\gamma=0$, $\beta\neq 0$, $\alpha=\delta$, $\alpha\neq 0$, $\lambda_1=\lambda_2=\lambda_3=0$, $\lambda=-\alpha^2$,\\
(ii)$\beta=\gamma=0$, $\alpha=\delta$, $\alpha\neq 0$, $\lambda_1=\lambda_2=\lambda_3=0$, $\lambda=-\alpha^2$.\\
\end{thm}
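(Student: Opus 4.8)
The plan is to read off the six scalar equations in (2.29) together with the structural constraints $\alpha+\delta\neq0$ and $\alpha\gamma+\beta\delta=0$ from (2.26), and to organize the analysis around the fifth equation, $(\beta+\gamma)\lambda_3=0$. This immediately splits the problem into two principal cases: either $\beta+\gamma=0$, or else $\lambda_3=0$ with $\beta+\gamma\neq0$. I would dispose of the second case first and then extract the two soliton families from the first.

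For the case $\lambda_3=0$, $\beta+\gamma\neq0$, I expect to obtain a contradiction and hence no solutions. The idea is to combine the three ``diagonal'' equations of (2.29) (the first, fourth, and sixth). Setting $\lambda_3=0$ kills the terms $(\alpha+\delta)\lambda_3$, $\alpha\lambda_3$, $\delta\lambda_3$, and then adding the first equation to the fourth and, separately, the first to the sixth makes $\lambda$ cancel while the quartic-looking $\beta,\gamma$ terms collapse, leaving $\alpha(\delta-\alpha)=\beta(\beta+\gamma)$ and $\delta(\alpha-\delta)=\gamma(\beta+\gamma)$. Adding these two relations yields $-(\alpha-\delta)^2=(\beta+\gamma)^2$, that is $(\alpha-\delta)^2+(\beta+\gamma)^2=0$. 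Over the reals this forces $\beta+\gamma=0$, against the case hypothesis. This sum-of-squares collapse is the heart of the argument, and identifying the right linear combinations of the curvature equations (so that $\lambda$ and the mixed $\beta,\gamma$ terms vanish simultaneously) is the step I expect to demand the most care.

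For the case $\beta+\gamma=0$, i.e. $\gamma=-\beta$, the constraint $\alpha\gamma+\beta\delta=0$ reduces to $\beta(\delta-\alpha)=0$, so either $\beta\neq0$ (forcing $\delta=\alpha$) or $\beta=\gamma=0$. In the branch $\beta\neq0$, $\delta=\alpha$, the hypothesis $\alpha+\delta\neq0$ gives $\alpha\neq0$; the first and fourth equations then force $\lambda_3=0$ and $\lambda=-\alpha^2$, while the second and third equations become a homogeneous linear system in $\lambda_1,\lambda_2$ whose determinant is $-(\alpha^2+\beta^2)\neq0$, forcing $\lambda_1=\lambda_2=0$. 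This is exactly case (i). In the branch $\beta=\gamma=0$, I would use the fourth and sixth equations to get $(\delta-\alpha)(\alpha+\delta-\lambda_3)=0$; the subcase $\delta\neq\alpha$ gives $\lambda_3=\alpha+\delta$, which fed into the first equation yields $\alpha^2+\delta^2=0$ and hence a contradiction, so $\delta=\alpha\neq0$, again producing $\lambda_3=0$, $\lambda=-\alpha^2$, $\lambda_1=\lambda_2=0$. This is case (ii).

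Finally, the converse direction is a direct substitution: inserting the data of (i) and of (ii) into all six equations of (2.29) confirms that each is satisfied, so both families are genuine left-invariant Riemann solitons. Thus the two cases of the dichotomy supply exactly the two listed families, and no other solutions occur.
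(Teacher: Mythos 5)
Your proposal is correct, and its treatment of the crucial non-existence case is genuinely different from (and cleaner than) the paper's. Both arguments open with the same dichotomy driven by the fifth equation $(\beta+\gamma)\lambda_3=0$, and your handling of the branch $\beta+\gamma=0$ mirrors the paper's: the constraint $\alpha\gamma+\beta\delta=0$ becomes $\beta(\delta-\alpha)=0$, and the two families (i), (ii) drop out exactly as in the paper's Cases 2)-a) and 2)-b) (your determinant $-(\alpha^2+\beta^2)\neq 0$ for $\lambda_1=\lambda_2=0$ is the same observation the paper makes implicitly, and your sub-argument $(\delta-\alpha)(\alpha+\delta-\lambda_3)=0$ replaces the paper's three sub-sub-cases on the vanishing of $\alpha$ and $\delta$ with one computation). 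The real divergence is in the branch $\beta+\gamma\neq 0$, $\lambda_3=0$: the paper extracts $\alpha^2-\delta^2+\beta^2-\gamma^2=0$ and $\alpha^2+\beta^2+\beta\gamma-\alpha\delta=0$ and then runs a further case analysis on whether $\beta\gamma-\alpha\delta$ vanishes, invoking the off-diagonal equations $\beta\lambda_1+\delta\lambda_2=0$, $\alpha\lambda_1+\gamma\lambda_2=0$ and the Lie algebra constraint $\alpha\gamma+\beta\delta=0$ to reach contradictions in each sub-case. You instead combine only the three diagonal equations to get $\alpha(\delta-\alpha)=\beta(\beta+\gamma)$ and $\delta(\alpha-\delta)=\gamma(\beta+\gamma)$, whose sum is $-(\alpha-\delta)^2=(\beta+\gamma)^2$, i.e.\ $(\alpha-\delta)^2+(\beta+\gamma)^2=0$, an immediate real contradiction with $\beta+\gamma\neq 0$. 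I have checked these identities against (2.27)--(2.29) and they are right. Your route buys a one-line elimination of the entire Case 1 without ever touching $\lambda_1,\lambda_2$ or the structural constraint; the paper's route is longer but follows the same mechanical template it applies to $G_6$. The only small point to make explicit when writing this up is that in the sub-branch $\beta=\gamma=0$, $\alpha=\delta\neq 0$ the conclusion $\lambda_1=\lambda_2=0$ comes from the second and third equations degenerating to $\delta\lambda_2=0$ and $\alpha\lambda_1=0$.
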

\begin{proof}
Case 1) $\beta+\gamma\neq 0$. Then $\lambda_3=0$. By the fourth equation and the sixth equation in (2.29), we get $\alpha^2-\delta^2+\beta^2-\gamma^2=0$. By the first equation and the fourth equation in (2.29), we get $\alpha^2+\beta^2+\beta\gamma-\alpha\delta=0$.\\
Case 1)-a)$\beta\gamma-\alpha\delta=0$. We get $\alpha=\beta=\gamma=\delta=0$. This is a contradiction.\\
Case 1)-b)$\beta\gamma-\alpha\delta\neq 0$. By the second equation and the third equation in (2.29), we get $\lambda_1=\lambda_2=0$.\\
Case 1)-b)-1)$\alpha=0$. Then $\delta\neq 0$ and $\beta=0$, then $\delta=\gamma=0$ by $\alpha^2-\delta^2+\beta^2-\gamma^2=0$. This is a contradiction.\\
Case 1)-b)-2)$\alpha\neq 0$. Then $\gamma=-\frac{\beta\delta}{\alpha}$. Then $\beta\neq 0$ and $\alpha\neq \delta$ by $\beta+\gamma\neq 0$.
By $\alpha+\delta\neq 0$, then $\alpha^2\neq \delta^2$. By $\alpha^2-\delta^2+\beta^2-\gamma^2=0$, we get $1+\frac{\beta^2}{\alpha^2}=0$. This
is a contradiction.\\
Case 2) $\beta+\gamma=0$. By (2.29), we have
\begin{align}
\left\{\begin{array}{l}
\alpha\delta-\delta\lambda_3-\alpha\lambda_3=-\lambda,\\
\beta\lambda_1+\delta\lambda_2=0,\\
\alpha\lambda_1+\gamma\lambda_2=0,\\
-\alpha^2+\alpha\lambda_3=\lambda,\\
-\delta^2+\delta\lambda_3=\lambda.\\
\end{array}\right.
\end{align}
By $\alpha\gamma+\beta\delta=0$, we have $\beta(\alpha-\delta)=0$.\\
Case 2)-a)$\beta\neq 0$. Then $\alpha=\delta$. Solving (2.30), we get the case (i).\\
Case 2)-b)$\beta=0$. Then $\gamma=0$. So $\delta\lambda_2=0$, $\alpha\lambda_1=0$.\\
Case 2)-b)-1) $\alpha\neq 0$, $\delta\neq 0$. Then $\lambda_1=\lambda_2=0$. Solving (2.30), we get the case (ii).\\
Case 2)-b)-2) $\alpha= 0$, $\delta\neq 0$. Solving (2.30), we get $\delta= 0$. This is a contradiction.\\
Case 2)-b)-3) $\alpha\neq 0$, $\delta= 0$. Solving (2.30), we get $\alpha= 0$. This is a contradiction.\\
\end{proof}
\indent By Theorem 2.2 in \cite{BO}, we have for $G_6$, there exists a pseudo-orthonormal basis $\{e_1,e_2,e_3\}$ with $e_3$ timelike such that the Lie
algebra of $G_6$ satisfies
\begin{equation}
[e_1,e_2]=\alpha e_2+\beta e_3,~~[e_1,e_3]=\gamma e_2+\delta e_3,~~[e_2,e_3]=0,~~\alpha+\delta\neq 0£¬~~\alpha\gamma-\beta\delta=0.
\end{equation}
By (2.40) in \cite{Ca2}, we have for $G_6$
\begin{align}
&R_{1212}=-\alpha^2+\frac{\beta(\beta-\gamma)}{2}+\frac{\beta^2-\gamma^2}{4},~~
R_{1313}=\delta^2+\frac{\gamma(\beta-\gamma)}{2}+\frac{\beta^2-\gamma^2}{4},\\\notag
&R_{2323}=\alpha\delta+\frac{(\beta-\gamma)^2}{4},~~
R_{1213}=0,~~R_{1223}=0,~~R_{1323}=0.
\end{align}
By page 14 in \cite{Ca1}, we get for $G_6$,
\begin{align}
L_Vg=\left(\begin{array}{ccc}
0&\alpha\lambda_2+\gamma\lambda_3&-\beta\lambda_2-\delta\lambda_3\\
\alpha\lambda_2+\gamma\lambda_3&-2\alpha\lambda_1&(\beta-\gamma)\lambda_1\\
-\beta\lambda_2-\delta\lambda_3&(\beta-\gamma)\lambda_1&2\delta\lambda_1
\end{array}\right).
\end{align}
By (2.5)(2.32)(2.33), we get that
$(G_6,V,g)$ is a left-invariant Riemann soliton if and only if
\begin{align}
\left\{\begin{array}{l}
-\alpha^2+\frac{\beta(\beta-\gamma)}{2}+\frac{\beta^2-\gamma^2}{4}+\alpha\lambda_1=-\lambda,\\
(\beta-\gamma)\lambda_1=0,\\
\beta\lambda_2+\delta\lambda_3=0,\\
\delta^2+\frac{\gamma(\beta-\gamma)}{2}+\frac{\beta^2-\gamma^2}{4}-\delta\lambda_1=\lambda,\\
\alpha\lambda_2+\gamma\lambda_3=0,\\
\alpha\delta+\frac{(\beta-\gamma)^2}{4}-\delta\lambda_1-\alpha\lambda_1=\lambda.\\
\end{array}\right.
\end{align}
\begin{thm}
$(G_6,V,g)$ is a left-invariant Riemann soliton if and only if \\
(i)$\beta\neq \gamma$, $\lambda_1=0$, $\alpha=\beta=0$, $\lambda=\frac{\gamma^2}{4}$, $\lambda_3=0$, $\delta^2=\gamma^2$,\\
(ii)$\beta\neq \gamma$, $\lambda_1=0$, $\alpha\neq 0$, $\alpha^2=\beta^2$, $\delta=\frac{\beta\gamma}{\alpha}$, $\lambda=\frac{(\beta+\gamma)^2}{4}$, $\lambda_2=-\frac{\gamma}{\alpha}\lambda_3$,\\
(iii)$\beta=\gamma$, $\beta\neq 0$, $\alpha=\delta$, $\alpha\neq 0$, $\lambda_1=\lambda_2=\lambda_3=0$, $\lambda=\alpha^2$,\\
(iv)$\lambda_3\neq 0$, $\lambda_2=-\frac{\delta}{\beta}\lambda_3$, $\alpha\neq 0$, $\beta\neq 0$, $\beta=\gamma$, $\alpha=\delta$, $\alpha^2=\beta^2$, $\lambda_1=0$, $\lambda=\alpha^2$,\\
(v)$\beta=\gamma=0$, $\alpha\neq 0$, $\delta\neq 0$, $\lambda_1=\lambda_2=\lambda_3=0$, $\alpha=\delta$, $\lambda=\alpha^2$.\\
\end{thm}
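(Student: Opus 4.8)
The plan is to treat the system (2.34) as polynomial equations in the structure constants $\alpha,\beta,\gamma,\delta$ and the soliton data $\lambda_1,\lambda_2,\lambda_3,\lambda$, subject to the two side conditions $\alpha+\delta\neq0$ and $\alpha\gamma-\beta\delta=0$ inherited from (2.31). The argument is driven by the second equation $(\beta-\gamma)\lambda_1=0$, which splits everything into the branch $\beta\neq\gamma$ (forcing $\lambda_1=0$) and the branch $\beta=\gamma$. In both branches the first, fourth and sixth equations each isolate $\lambda$, so that equating them pairwise eliminates $\lambda$ and produces purely algebraic relations among $\alpha,\beta,\gamma,\delta$ (and $\lambda_1$). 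Meanwhile the third and fifth equations form a homogeneous linear system in $(\lambda_2,\lambda_3)$ with coefficient determinant $\beta\gamma-\alpha\delta$; whether this determinant vanishes decides if $\lambda_2,\lambda_3$ are forced to be zero or survive as a one-parameter family.

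First I would handle $\beta\neq\gamma$, where $\lambda_1=0$. Equating the expressions for $\lambda$ coming from the first and fourth equations collapses, after simplifying the curvature terms, to $\delta^2-\alpha^2+\beta^2-\gamma^2=0$; equating those from the first and sixth equations collapses, via the identity $\tfrac{\beta(\beta-\gamma)}{2}+\tfrac{\beta^2-\gamma^2}{4}+\tfrac{(\beta-\gamma)^2}{4}=\beta(\beta-\gamma)$, to $\alpha^2-\alpha\delta-\beta(\beta-\gamma)=0$. I then split on $\alpha$. If $\alpha=0$, the constraint gives $\beta\delta=0$ and the second relation gives $\beta(\gamma-\beta)=0$, so $\beta=0$ (as $\beta\neq\gamma$); the fifth equation then forces $\lambda_3=0$ and the remaining equations give $\delta^2=\gamma^2$ and $\lambda=\tfrac{\gamma^2}{4}$, which is case (i) (with $\lambda_2$ unconstrained). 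If $\alpha\neq0$, substituting $\gamma=\beta\delta/\alpha$ into the second relation and clearing the denominator factors as $(\alpha^2-\beta^2)(\alpha-\delta)=0$; the branch $\alpha=\delta$ would force $\gamma=\beta$ through the constraint, contradicting $\beta\neq\gamma$, so $\alpha^2=\beta^2$, and then $\delta=\beta\gamma/\alpha$, $\lambda=\tfrac{(\beta+\gamma)^2}{4}$, $\lambda_2=-\tfrac{\gamma}{\alpha}\lambda_3$, which is case (ii).

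Next the branch $\beta=\gamma$, in which the $(\beta-\gamma)$-curvature terms vanish and the constraint reduces to $\beta(\alpha-\delta)=0$. Equating the three expressions for $\lambda$ now gives $(\alpha-\delta)(\alpha+\delta-\lambda_1)=0$ and $\alpha(\alpha-\delta)+\delta\lambda_1=0$. If $\beta=\gamma=0$, I would eliminate the mixed possibilities: if exactly one of $\alpha,\delta$ vanishes the equations force the other to vanish as well, a contradiction, while if both are nonzero with $\alpha\neq\delta$ one is led to $\alpha^2+\delta^2=0$, impossible over $\mathbb{R}$; hence $\alpha=\delta\neq0$, and then $\lambda_1=0$, $\lambda=\alpha^2$, $\lambda_2=\lambda_3=0$, which is case (v). If $\beta=\gamma\neq0$ the constraint forces $\alpha=\delta$, hence $\alpha=\delta\neq0$ by $\alpha+\delta\neq0$, and again $\lambda_1=0$, $\lambda=\alpha^2$; here the homogeneous pair for $(\lambda_2,\lambda_3)$ has determinant $\alpha^2-\beta^2$, so either $\lambda_3=0$ (whence $\lambda_2=0$), giving case (iii), or $\lambda_3\neq0$, which forces $\alpha^2=\beta^2$ and $\lambda_2=-\tfrac{\delta}{\beta}\lambda_3$, giving case (iv).

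I expect the difficulty to be bookkeeping rather than conceptual: the curvature entries in (2.32) carry the quadratic combinations $\tfrac{\beta(\beta-\gamma)}{2}+\tfrac{\beta^2-\gamma^2}{4}$, and the whole reduction depends on these collapsing cleanly when the equations are equated, so the main care lies in verifying the two collapsing identities above and in keeping the case tree both exhaustive and disjoint. In particular one must confirm that the degenerate-determinant loci responsible for the free-parameter soliton families appearing in (i), (ii) and (iv) are precisely those compatible with the constraint $\alpha\gamma=\beta\delta$.
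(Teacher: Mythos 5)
Your proposal is correct and follows essentially the same route as the paper: split on $(\beta-\gamma)\lambda_1=0$, eliminate $\lambda$ from the first, fourth and sixth equations to get $\delta^2-\alpha^2+\beta^2-\gamma^2=0$ and $\alpha^2-\beta^2+\beta\gamma-\alpha\delta=0$, and treat the third and fifth equations as a homogeneous linear system in $(\lambda_2,\lambda_3)$. The only difference is organizational: in the $\beta\neq\gamma$ branch you split directly on $\alpha=0$ versus $\alpha\neq0$ and substitute from the constraint $\alpha\gamma=\beta\delta$, whereas the paper first splits on the determinant $\beta\gamma-\alpha\delta$ and discards the nonvanishing case by contradiction; both lead to the same five families.
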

\begin{proof}
Case 1) $\beta-\gamma\neq 0$. Then $\lambda_1=0$. So by the first, the fourth and sixth equations in (2.34), we get
\begin{equation}
\delta^2-\alpha^2+\beta^2-\gamma^2=0,~~\alpha^2-\beta^2+\beta\gamma-\alpha\delta=0.
\end{equation}
Case 1)-a) $\beta\gamma-\alpha\delta=0$. So $\alpha^2=\beta^2$ and $\delta^2=\gamma^2$ by (2.35).\\
Case 1)-a)-1) $\alpha=0$. Solving (2.34), we get the case (i).\\
Case 1)-a)-2)$\alpha\neq 0$. Then $\delta=\frac{\beta\gamma}{\alpha}$. Solving (2.34), we get the case (ii).\\
Case 1)-b) $\beta\gamma-\alpha\delta\neq 0$. So $\lambda_2=\lambda_3=0$.\\
Case 1)-b)-1) $\alpha=0$. So $\delta\neq 0$ and $\beta=0$. This is a contradiction with $\beta\gamma-\alpha\delta\neq 0$.\\
Case 1)-b)-2) $\alpha\neq 0$. We get $\gamma=\frac{\beta\delta}{\alpha}$ and $\alpha^2=\beta^2$ by (2.35). Then $\beta\gamma-\alpha\delta=0$.
This is a contradiction.\\
Case 2) $\beta-\gamma= 0$. Then $\beta(\alpha-\delta)=0$. By (2.34), we have
\begin{align}
\left\{\begin{array}{l}
-\alpha^2+\alpha\lambda_1=-\lambda,\\
\beta\lambda_2+\delta\lambda_3=0,\\
\delta^2-\delta\lambda_1=\lambda,\\
\alpha\lambda_2+\gamma\lambda_3=0,\\
\alpha\delta-\delta\lambda_1-\alpha\lambda_1=\lambda.\\
\end{array}\right.
\end{align}
Case 2)-a) $\beta\neq  0$. Then $\alpha=\delta$ and $\lambda_2=-\frac{\delta}{\beta}\lambda_3=-\frac{\gamma}{\alpha}\lambda_3$.\\
Case 2)-a)-1) $\lambda_3=0$. Then we get the case (iii).\\
Case 2)-a)-2) $\lambda_3\neq 0$.  Then we get the case (iv).\\
Case 2)-b) $\beta=0$. Then $\gamma=0$ and $\delta\lambda_3=0$, $\alpha\lambda_2=0$.\\
Case 2)-b)-1) $\alpha\neq 0$, $\delta\neq 0$. Then $\lambda_2=\lambda_3=0$. Solving (2.36), we get the case (v).\\
Case 2)-b)-2) $\alpha=0$, $\delta\neq 0$. Solving (2.36), we get $\delta=0$. This is a contradiction.\\
Case 2)-b)-3) $\alpha \neq 0$, $\delta=0$. Solving (2.36), we get $\alpha=0$. This is a contradiction.\\
\end{proof}
\indent By Theorem 4.2 in \cite{BO}, we have for $G_7$, there exists a pseudo-orthonormal basis $\{e_1,e_2,e_3\}$ with $e_3$ timelike such that the Lie
algebra of $G_7$ satisfies
\begin{equation}
[e_1,e_2]=-\alpha e_1-\beta e_2-\beta e_3,~~[e_1,e_3]=\alpha e_1+\beta e_2+\beta e_3,~~[e_2,e_3]=\gamma e_1+\delta e_2+\delta e_3,,~~\alpha+\delta\neq 0,~~\alpha\gamma=0.
\end{equation}
By (2.44) in \cite{Ca2}, we have for $G_7$
\begin{align}
&R_{1212}=\alpha\delta-\alpha^2-\beta\gamma-\frac{\gamma^2}{4},~~
R_{1313}=\alpha\delta-\alpha^2-\beta\gamma+\frac{\gamma^2}{4},\\\notag
&R_{2323}=-\frac{3}{4}\gamma^2,~~
R_{1213}=\alpha^2-\alpha\delta+\beta\gamma,~~R_{1223}=0,~~R_{1323}=0.
\end{align}
By page 16 in \cite{Ca1}, we get for $G_7$,
\begin{align}
L_Vg=\left(\begin{array}{ccc}
-2\alpha(\lambda_2-\lambda_3)&\alpha\lambda_1-\beta\lambda_2+(\beta+\gamma)\lambda_3&-\alpha\lambda_1+(\beta-\gamma)\lambda_2-\beta\lambda_3\\
\alpha\lambda_1-\beta\lambda_2+(\beta+\gamma)\lambda_3&2\beta\lambda_1+2\delta\lambda_3&-2\beta\lambda_1-\delta\lambda_2-\delta\lambda_3\\
-\alpha\lambda_1+(\beta-\gamma)\lambda_2-\beta\lambda_3&-2\beta\lambda_1-\delta\lambda_2-\delta\lambda_3&2\beta\lambda_1+2\delta\lambda_2
\end{array}\right).
\end{align}
By (2.5)(2.38)(2.39), we get that
$(G_7,V,g)$ is a left-invariant Riemann soliton if and only if
\begin{align}
\left\{\begin{array}{l}
\alpha\delta-\alpha^2-\beta\gamma-\frac{\gamma^2}{4}-(\beta\lambda_1+\delta\lambda_3)+\alpha(\lambda_2-\lambda_3)=-\lambda,\\
2(\alpha^2-\alpha\delta+\beta\gamma)+2\beta\lambda_1+\delta\lambda_2+\delta\lambda_3=0,\\
-\alpha\lambda_1+(\beta-\gamma)\lambda_2-\beta\lambda_3=0,\\
\alpha\delta-\alpha^2-\beta\gamma+\frac{\gamma^2}{4}-\beta\lambda_1-\delta\lambda_2-\alpha(\lambda_2-\lambda_3)=\lambda,\\
\alpha\lambda_1-\beta\lambda_2+(\beta+\gamma)\lambda_3=0,\\
-\frac{3}{4}\gamma^2-\delta\lambda_2+\delta\lambda_3=\lambda.\\
\end{array}\right.
\end{align}
\begin{thm}
$(G_7,V,g)$ is a left-invariant Riemann soliton if and only if \\
(i) $\alpha=0$, $\delta\neq 0$, $\beta=\gamma=0$, $\lambda_2=\lambda_3=\lambda=0$,\\
(ii) $\alpha=0$, $\delta\neq 0$, $\gamma=0$, $\beta\neq 0$, $\lambda_2=\lambda_3$, $\lambda=0$, $\lambda_1=-\frac{\delta}{\beta}\lambda_2$,\\
(iii) $\alpha\neq 0$, $\gamma=0$, $\alpha=\delta$, $\lambda_1=\lambda_2=\lambda_3=\lambda=0$.\\
\end{thm}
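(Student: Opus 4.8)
The plan is to solve the algebraic system (2.40) for $\lambda,\lambda_1,\lambda_2,\lambda_3$, regarding the structure constants $\alpha,\beta,\gamma,\delta$ (constrained by $\alpha+\delta\neq0$ and $\alpha\gamma=0$) as parameters and recording exactly which parameter constraints are forced. Since $\alpha\gamma=0$, the governing dichotomy is $\alpha=0$ versus $\alpha\neq0$: in the first branch $\delta\neq0$ automatically, and in the second branch $\gamma=0$ automatically. I would organize the whole argument around this split, expecting families (i) and (ii) to come from $\alpha=0$ and family (iii) from $\alpha\neq0$.

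Two preliminary reductions streamline everything. First, adding the first and fourth equations of (2.40) reproduces exactly the negative of the second equation, so the second equation is redundant and the diagonal content is carried by the first, fourth and sixth equations together with $\lambda$. Second, the two off-diagonal equations (the third and fifth) behave very differently in the two branches: when $\alpha=0$ they form a homogeneous linear system in $(\lambda_2,\lambda_3)$ with determinant $-\gamma^2$, while when $\alpha\neq0$ they coincide up to sign and collapse to the single relation $\alpha\lambda_1=\beta(\lambda_2-\lambda_3)$.

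In the branch $\alpha=0$ I would argue as follows. If $\gamma\neq0$ the determinant $-\gamma^2$ is nonzero, forcing $\lambda_2=\lambda_3=0$; the sixth equation then gives $\lambda=-\frac{3}{4}\gamma^2$, and subtracting the first equation from the fourth yields $2\gamma^2=0$, a contradiction, so no soliton occurs. If $\gamma=0$ the off-diagonal system degenerates: for $\beta\neq0$ it gives $\lambda_2=\lambda_3$, whence the sixth equation forces $\lambda=0$ and the first and fourth pin down $\lambda_1=-\frac{\delta}{\beta}\lambda_2$, which is family (ii); for $\beta=0$ the remaining equations force $\lambda_2=\lambda_3=\lambda=0$, which is family (i).

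The hard part will be the branch $\alpha\neq0$, where $\gamma=0$ automatically. After invoking $\alpha\lambda_1=\beta(\lambda_2-\lambda_3)$, I would subtract the fourth equation from the first and eliminate $\lambda$ using the sixth equation, which produces the pivotal relation $(2\alpha-\delta)(\lambda_2-\lambda_3)=0$. This is where I expect the main obstacle: one must pursue the two sub-cases $\lambda_2=\lambda_3$ and $\delta=2\alpha$ separately, feed each back through the first, second and fourth equations (with $\lambda_1$ now determined), and read off the surviving constraints on $(\alpha,\delta)$ and on the $\lambda_i$. Reducing everything to family (iii), namely $\alpha=\delta$ with $\lambda_1=\lambda_2=\lambda_3=\lambda=0$, is the delicate step, and it is precisely here that I would check most carefully that every consistent combination has actually been accounted for and that the sub-branches violating $\alpha+\delta\neq0$ are correctly discarded. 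All remaining work is routine linear elimination.
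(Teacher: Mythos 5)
Your handling of the $\alpha=0$ branch is complete and correct: the observation that the first plus the fourth equation of (2.40) is exactly the negative of the second, the determinant $-\gamma^2$ of the off-diagonal pair, the contradiction $2\gamma^2=0$ when $\gamma\neq0$, and the derivation of families (i) and (ii) all check out and agree with (indeed slightly streamline) the paper's Case 1.

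The genuine gap is the branch $\alpha\neq0$, which you reduce to the relation $(2\alpha-\delta)(\lambda_2-\lambda_3)=0$ and then defer as ``routine linear elimination.'' It is not routine, and carried out honestly it does not reduce to family (iii). In the sub-case $\lambda_2=\lambda_3$, equations three and five give $\lambda_1=0$ and the sixth gives $\lambda=0$; but then the first, second and fourth equations all collapse to the single condition $\delta\lambda_2=\alpha(\delta-\alpha)$, which for any $\delta\neq0$ is solved by $\lambda_2=\lambda_3=\alpha(\delta-\alpha)/\delta$ without forcing $\alpha=\delta$. For instance $\alpha=1$, $\beta=\gamma=0$, $\delta=2$, $\lambda_1=0$, $\lambda_2=\lambda_3=\tfrac12$, $\lambda=0$ satisfies all six equations of (2.40) yet is not of the form (iii); the sub-case $\delta=2\alpha$ with $\lambda_2\neq\lambda_3$ likewise admits a one-parameter family of solutions. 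So your plan cannot be completed to a proof of the theorem as stated, and you should flag this rather than promise that the eliminations close. For comparison, the paper's own proof of Case 2 claims to obtain $\alpha=\delta$ ``by the first, the second and the fourth equations in (2.44),'' but those three equations are linearly dependent (precisely the redundancy you identified), so the paper's derivation is also invalid at exactly the point you left open.
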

\begin{proof}
Case 1) $\alpha=0$. Then $\delta\neq 0$. By (2.40), we have
\begin{align}
\left\{\begin{array}{l}
-\beta\gamma-\frac{\gamma^2}{4}-(\beta\lambda_1+\delta\lambda_3)=-\lambda,\\
2\beta\gamma+2\beta\lambda_1+\delta\lambda_2+\delta\lambda_3=0,\\
(\beta-\gamma)\lambda_2-\beta\lambda_3=0,\\
-\beta\gamma+\frac{\gamma^2}{4}-\beta\lambda_1-\delta\lambda_2=\lambda,\\
-\beta\lambda_2+(\beta+\gamma)\lambda_3=0,\\
-\frac{3}{4}\gamma^2-\delta\lambda_2+\delta\lambda_3=\lambda.\\
\end{array}\right.
\end{align}
Case 1)-a) $\gamma\neq 0$. Then $\lambda_2=\lambda_3=0$ by the third equation and the fifth equation in (2.41). By (2.41), we have
\begin{align}
\left\{\begin{array}{l}
-\beta\gamma-\frac{\gamma^2}{4}-\beta\lambda_1=-\lambda,\\
\beta\gamma+\beta\lambda_1=0,\\
-\beta\gamma+\frac{\gamma^2}{4}-\beta\lambda_1=\lambda,\\
-\frac{3}{4}\gamma^2=\lambda.\\
\end{array}\right.
\end{align}
Case 1)-a)-1) $\beta=0$. By (2.42), we get $\gamma=0$. This is a contradiction.\\
Case 1)-a)-2) $\beta\neq 0$. By (2.42), we get $\lambda_1=-\gamma$ and $\gamma=0$. This is a contradiction.\\
Case 1)-b) $\gamma=0$. By (2.41), we have
\begin{align}
\left\{\begin{array}{l}
\beta\lambda_1+\delta\lambda_3=\lambda,\\
2\beta\lambda_1+\delta\lambda_2+\delta\lambda_3=0,\\
\beta(\lambda_2-\lambda_3)=0,\\
-\beta\lambda_1-\delta\lambda_2=\lambda,\\
-\delta\lambda_2+\delta\lambda_3=\lambda.\\
\end{array}\right.
\end{align}
Case 1)-b)-1) $\beta=0$. Solving (2.43), we get the case (i).\\
Case 1)-b)-2) $\beta\neq 0$. Solving (2.43), we get the case (ii).\\
Case 2) $\alpha\neq 0$. Then $\gamma=0$. By (2.40), we get
\begin{align}
\left\{\begin{array}{l}
\alpha\delta-\alpha^2-(\beta\lambda_1+\delta\lambda_3)+\alpha(\lambda_2-\lambda_3)=-\lambda,\\
2(\alpha^2-\alpha\delta)+2\beta\lambda_1+\delta\lambda_2+\delta\lambda_3=0,\\
-\alpha\lambda_1+\beta\lambda_2-\beta\lambda_3=0,\\
\alpha\delta-\alpha^2-\beta\lambda_1-\delta\lambda_2-\alpha(\lambda_2-\lambda_3)=\lambda,\\
-\delta\lambda_2+\delta\lambda_3=\lambda.\\
\end{array}\right.
\end{align}
By the third equation in (2.44), we have $\lambda_1=\frac{\beta}{\alpha}(\lambda_2-\lambda_3)$. By the first, the second and the fourth equations in (2.44), we get $\alpha=\delta$ and $2\beta\lambda_1+\delta\lambda_2+\delta\lambda_3=0$. By the fourth and the fifth equations in (2.44), we get
$\beta\lambda_1+\delta\lambda_2=0$ and $\lambda_2=\lambda_3$. Then by the fifth equation in (2.44), we get $\lambda=0$. So
$\lambda_1=\lambda_2=\lambda_3=0$. This is the case (iii).
\end{proof}
\vskip 1 true cm

\section{Acknowledgements}

The author was supported in part by NSFC No.11771070.

\vskip 1 true cm


\bigskip
\bigskip

\noindent {\footnotesize {\it Y. Wang} \\
{School of Mathematics and Statistics, Northeast Normal University, Changchun 130024, China}\\
{Email: wangy581@nenu.edu.cn}

\end{document}